\def\l{\left}
\def\r{\right}
\def\bg{\bigg}
\def\({\bg(}
\def\){\bg)}
\def\t{\text}
\def\f{\frac}
\def\eq{\equiv}
\def\Z{\mathbb Z}
\def\C{\mathbb C}
\def\N{\mathbb N}
\def\<{\langle}
\def\>{\rangle}
\def\1{{\bf 1}}
\def\mod #1{{\rm mod}\ #1}
\theoremstyle{plain}
\newtheorem{theorem}{Theorem}
\newtheorem{conjecture}{Conjecture}
\newtheorem{lemma}{Lemma}
\newtheorem{corollary}{Corollary}
\theoremstyle{definition}
\newtheorem*{Acks}{Acknowledgments}
\theoremstyle{remark}
\newtheorem{Rem}{Remark}
\numberwithin{equation}{section}
\begin{document}
\title[Congruences concerning generalized central trinomial coefficients]{Congruences concerning generalized central \\trinomial coefficients}

\author[Jia-Yu Chen]{Jia-Yu Chen}
\address[Jia-Yu Chen]{Department of Mathematics, Nanjing
University, Nanjing 210093, People's Republic of China}
\email{563917224@qq.com}

\author[Chen Wang]{Chen Wang}
\address[Chen Wang]{Department of Mathematics, Nanjing
University, Nanjing 210093, People's Republic of China}
\email{cwang@smail.nju.edu.cn}

\begin{abstract}
For any $n\in\mathbb{N}=\{0,1,2,\ldots\}$ and $b,c\in\mathbb{Z}$, the generalized central trinomial coefficient $T_n(b,c)$ denotes the coefficient of $x^n$ in the expansion of $(x^2+bx+c)^n$. Let $p$ be an odd prime. In this paper, we determine the summation $\sum_{k=0}^{p-1}T_k(b,c)^2/m^k$ modulo $p^2$ for integers $m$ with certain restrictions. As applications, we confirm some conjectural congruences of Sun [Sci. China Math. 57 (2014), 1375--1400].
\end{abstract}

\subjclass[2020]{Primary 11A07, 11B75; Secondary 05A10, 11B65}
\keywords{Generalized central trinomial coefficients, binomial coefficients, congruences}
\thanks{The second author is the corresponding author}

\maketitle

\section{Introduction}
\setcounter{lemma}{0} \setcounter{theorem}{0}
\setcounter{equation}{0}\setcounter{proposition}{0}
\setcounter{Rem}{0}\setcounter{conjecture}{0}

For $n\in\N=\{0,1,2,\ldots\}$, the $n$th central trinomial coefficient $T_n$ is the coefficient of $x^n$ in the expansion of $(x^2+x+1)^n$. By the multinomial theorem, it is easy to see that
\begin{equation}\label{tnformula}
T_n=\sum_{k=0}^{\lfloor n/2\rfloor}\binom{n}{2k}\binom{2k}{k}=\sum_{k=0}^{\lfloor n/2\rfloor}\binom{n}{k}\binom{n-k}{k}.
\end{equation}
Central trinomial coefficients have some interesting combinatorial interpretations. For example, $T_n$ counts the lattice paths from $(0,0)$ to $(n,0)$ with steps $U=(1,1),\ D=(1,-1)$ and $H=(1,0)$. For more combinatorial interpretations and formulas of $T_n$, one may consult \cite[A002426]{OEIS}.

For $b,c\in\Z$, the generalized central trinomial coefficients $T_n(b,c)$ are defined as the coefficients of $x^n$ in the expansion of $(x^2+bx+c)^n$. By the multinomial theorem, we have the following formula similar to \label{tnformula}:
\begin{equation}\label{tnbcformula}
T_n(b,c)=\sum_{k=0}^{\lfloor n/2\rfloor}\binom{n}{2k}\binom{2k}{k}b^{n-2k}c^k=\sum_{k=0}^{\lfloor n/2\rfloor}\binom{n}{k}\binom{n-k}{k}b^{n-2k}c^k.
\end{equation}
Some well-known combinatorial sequences are the special cases of $T_n(b,c)$. For example,
$$
T_n=T_n(1,1),\ \binom{2n}{n}=T_n(2,1),\ D_n=T_n(3,2),
$$
where $D_n=\sum_{k=0}^n\binom{n}{k}\binom{n+k}{k}$ is the $n$th central Delannoy number (cf. \cite[A001850]{OEIS}). Throughout the paper, we set $d=b^2-4c$. The generalized central trinomial coefficients are also related to the well-known Legendre polynomials $P_n(x)=\sum_{k=0}^n\binom{n}{k}\binom{n+k}{k}((x-1)/2)^k$ (cf. \cite[p. 38]{Gould}) via the following identity (see \cite{Noe,Sun2014a,Sun2014b}):
$$
T_n(b,c)=(\sqrt{d})^n P_n\l(\f{b}{\sqrt{d}}\r).
$$

It is known that sums involving central binomial coefficients sometimes have beautiful congruence properties (cf. e.g., \cite{GuGuo2020,Liu2017,MaoPan2017,MT2013,MT2018,SunZH2011,Sun2010,Sun2011,Sun2011a}). As mentioned above, the central binomial coefficients $\binom{2n}{n}=T_n(2,1)$. Motivated by this, Sun \cite{Sun2014a,Sun2014b} investigated the congruences for sums involving generalized central trinomial coefficients systematically. For any odd prime $p$, Sun \cite{Sun2014a} proved some congruences modulo $p$ for sums of the following forms:
$$
\sum_{k=0}^{p-1}\binom{2k}{k}T_k(b,c)/m^k\quad\t{and}\quad\sum_{k=0}^{p-1}\binom{2k}{k}T_{2k}(b,c)/m^k.
$$
Moreover, he also posed some conjectures, one of which states as follows: for any prime $p>3$ we have
\begin{equation}\label{wangsunres}
\sum_{k=0}^{p-1}\f{\binom{2k}{k}}{12^k}T_k\eq\l(\f{p}3\r)\f{3^{p-1}+3}{4}\pmod{p^2},
\end{equation}
where $(\f{\cdot}{p})$ stands for the Legendre symbol. It is worth mentioning that \eqref{wangsunres} has been confirmed by the second author and Sun \cite{WangSun}.

Sun \cite{Sun2014b} obtained some congruences involving $T_n(b,c)^2$. For instance, he proved that for any $b,c\in\Z$ and prime $p>3$ with $p\nmid d$ we have
\begin{equation}\label{sunres}
\sum_{k=0}^{p-1}\f{T_k(b,c)^2}{d^k}\eq\l(\f{16c}{d}\r)^{(p-1)/2}+p\sum_{\substack{k=0\\ k\neq (p-1)/2}}^{p-1}\f{\binom{2k}{k}}{2k+1}\l(-\f{c}{d}\r)^k\pmod{p^3}.
\end{equation}
In particular, taking $b=2, c=-1$ in \eqref{sunres} and via some further computation, he deduced that
\begin{equation}\label{sunres1}
\sum_{k=0}^{p-1}\f{T_k(2,-1)^2}{8^k}\eq\l(\f{-2}{p}\r)\pmod{p^2}.
\end{equation}
Moreover, Sun \cite[Conjecture 5.4]{Sun2014b} posed the following conjecture.

\begin{conjecture}\label{sunconj}
Let $p$ be an odd prime. Then
\begin{equation}\label{sunconjeq1}
\sum_{k=0}^{p-1}\f{T_k(2,2)^2}{4^k}\eq\sum_{k=0}^{p-1}\f{\binom{2k}{k}^2}{8^k}\quad \l(\mod{p^{(5+(\f{-1}{p}))/2}}\r).
\end{equation}
If $p>3$, then
\begin{equation}\label{sunconjeq2}
\sum_{k=0}^{p-1}\f{T_k(4,1)^2}{4^k}\eq\sum_{k=0}^{p-1}\f{T_k(4,1)^2}{36^k}\eq\l(\f{-1}{p}\r)\pmod{p^2}.
\end{equation}
\end{conjecture}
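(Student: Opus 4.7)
The plan is to derive both parts of Conjecture \ref{sunconj} from the paper's main result, which provides a closed-form evaluation of $\sum_{k=0}^{p-1}T_k(b,c)^2/m^k$ modulo $p^2$ under suitable hypotheses on $(b,c,m)$. Each congruence then follows by specializing $(b,c,m)$ to an admissible triple and simplifying. A recurring simplification is that the quadratic character appearing in the output of the main theorem (analogous to the factor $\l(\f{16c}{d}\r)^{(p-1)/2}$ in \eqref{sunres}) should collapse to $\l(\f{-1}{p}\r)$ for the triples occurring in the conjecture, and that the $p$-linear correction term should vanish after some elementary identities.

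For \eqref{sunconjeq2}, set $b=4$, $c=1$, so $d=b^2-4c=12$. Applying the main theorem first with $m=4$ and then with $m=36$ produces explicit mod-$p^2$ evaluations of the two left-hand sums. Since in both cases the relevant parameters arrange themselves into a perfect square times $-1$ (note $d/m=3$ and $d/m=1/3$, and $cm$ equals $4$ or $36$), the Legendre-type main term should collapse to $\l(\f{-1}{p}\r)$; a direct computation should then show that the residual $p$-dependent correction is $0$ modulo $p^2$, yielding the claimed common value.

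For the $p\equiv 3\pmod 4$ portion of \eqref{sunconjeq1}, the target modulus is only $p^2$ and the main theorem applies directly with $b=c=2$, $m=4$, $d=-4$. Its output for $\sum_{k=0}^{p-1}T_k(2,2)^2/4^k$ should be matched against the classical Ramanujan-type sum $\sum_{k=0}^{p-1}\binom{2k}{k}^2/8^k$, whose mod-$p^2$ evaluation is already in the literature (after Z.-W.\ Sun and collaborators); the two expressions should then coincide term-by-term.

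The main obstacle is the $p\equiv 1\pmod 4$ case of \eqref{sunconjeq1}, where the required modulus is $p^3$ but the main theorem supplies only $p^2$. To upgrade by one power, I would start from the Legendre-polynomial identity $T_k(2,2)=(2i)^kP_k(-i)$, square it, and apply a Clausen-type or quadratic transformation expressing $P_k(-i)^2$ as a hypergeometric sum in $\binom{2k}{k}^2/8^k$, so that the left-hand side is identified with the right-hand side up to a correction of controlled shape. This correction would then be handled modulo $p^3$ using Morley's and Wolstenholme's congruences together with the known supercongruences for $\sum_{k=0}^{p-1}\binom{2k}{k}^2 x^k$ at $x=\pm 1/8$. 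Finding a transformation that both squares cleanly and preserves the denominator $8^k$ is the delicate step; failing that, one can attack the difference of the two sides directly by Zeilberger-style creative telescoping modulo $p^3$.
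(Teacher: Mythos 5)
For the modulus-$p^2$ assertions your route coincides with the paper's: the paper confirms exactly these cases of Conjecture \ref{sunconj} (its \eqref{coreq1} and \eqref{coreq3}) by specializing the main theorem, as you propose. Two points of precision. First, for \eqref{sunconjeq1} with $p\equiv3\pmod4$, note that $b=c=2$ gives $d=-4$ and $m=4=-d$, so the hypothesis $(m-d)^2=16mc$ of part (ii) fails and it is part (i), i.e.\ \eqref{maintheq1}, that applies; it yields directly $\sum_{k=0}^{p-1}T_k(2,2)^2/4^k\equiv\sum_{l=0}^{(p-1)/2}\binom{2l}{l}^2/8^l\pmod{p^2}$, and since $p^2\mid\binom{2k}{k}^2$ for $(p+1)/2\le k\le p-1$ this is already the full sum $\sum_{k=0}^{p-1}\binom{2k}{k}^2/8^k$; no literature evaluation of the right-hand side is needed for the congruence itself (the paper invokes Z.-H. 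Sun and Z.-W. Sun only to record the explicit values in \eqref{coreq1}). Second, for \eqref{sunconjeq2} your specialization $b=4$, $c=1$, $d=12$, $m\in\{4,36\}$ is exactly the paper's, but the vanishing of the $p$-linear correction is not a bare ``direct computation'': by \eqref{maintheq2} it amounts (for $m=4$) to $q_p(12)-q_p(4)+S_{p-1}(1)-S_{p-1}(1/3)\equiv0\pmod p$, which rests on the known congruences $S_{p-1}(1)\equiv0\pmod p$ \cite{PanSun} and $S_{p-1}(1/3)\equiv q_p(3)\pmod p$ \cite{Sun2010}, together with $q_p(12)\equiv q_p(4)+q_p(3)\pmod p$; the $m=36$ case is analogous. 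These inputs must be cited or proved -- they are the actual content of the verification, and also the reason the heuristic about the main term ``collapsing'' is beside the point: in \eqref{maintheq2} the main term is $\left(\frac{-1}{p}\right)$ from the start.

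The genuine gap is the case $p\equiv1\pmod4$ of \eqref{sunconjeq1}, where the asserted modulus is $p^3$. Your plan -- squaring $T_k(2,2)=(2i)^kP_k(-i)$, finding a Clausen-type or quadratic transformation that preserves the denominator $8^k$, then invoking Morley/Wolstenholme and known supercongruences, or else creative telescoping modulo $p^3$ -- is only a program: no identity is exhibited, no error term is controlled, and it is precisely the existence of such a transformation that is in doubt. So the statement as printed is not proved by your proposal. In fairness, the paper does not prove it either: it explicitly confirms Conjecture \ref{sunconj} only in the modulus-$p^2$ cases, and the $p^3$ assertion for $p\equiv1\pmod4$ is left open. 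Thus your proposal matches the paper on everything the paper actually establishes, but the proposed upgrade to $p^3$ should be flagged as unresolved rather than folded into the proof.
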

Clearly, Conjecture \ref{sunconj} cannot be deduced from \eqref{sunres}.

Let us introduce some concepts which are necessary to state our main result. Let $p$ be a prime. Then for any integer $a\not\eq0\pmod{p}$ we have $a^{p-1}\eq1\pmod{p}$ by the Fermat's little theorem, and then $q_p(a):=(a^{p-1}-1)/p\in\Z_p$. As usual, we call $q_p(a)$ a Fermat quotient with base $a$. For any $p$-adic integer $x$, let $\<x\>_p$ denote the least nonnegative residue of $x$ modulo $p$. Set
$$
S_{p-1}(x):=\sum_{k=1}^{p-1}\f{\binom{2k}{k}}{k}x^k.
$$

Now we state our main theorem which gives different parametric congruences for the summation $\sum_{k=0}^{p-1}T_n(b,c)^2/m^k$.

\begin{theorem}\label{maintheorem}
Let $p$ be an odd prime and let $b,c\in\Z$ and $d=b^2-4c$.

{\rm (i)} If $p$ does not divide $d$, then we have
\begin{equation}\label{maintheq1}
\sum_{k=0}^{p-1}\f{T_k(b,c)^2}{(-d)^k}\eq\sum_{k=0}^{(p-1)/2}\binom{2l}{l}^2\l(-\f{c}{4d}\r)^l\pmod{p^2}.
\end{equation}

{\rm (ii)} Let $m$ be an integer satisfying the equation $(m-d)^2=16mc$. If $p\nmid md(m-d)$, then we have
\begin{align}\label{maintheq2}
&\sum_{k=0}^{p-1}\f{T_k(b,c)^2}{m^k}\notag\\
\eq&\l(\f{-1}{p}\r)+\f{pd}{d-m}\l(\f{-1}{p}\r)\l(q_p(d)-q_p(m)+S_{p-1}\l(\f{m+d}{4m}\r)-S_{p-1}\l(\f{m+d}{4d}\r)\r)\pmod{p^2}.
\end{align}
\end{theorem}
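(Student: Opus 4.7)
The plan is a two-step strategy. First I establish a polynomial identity expressing $T_k(b,c)^2$ as a finite sum in the variables $c$ and $d = b^2-4c$, then I analyze the resulting sums modulo $p^2$ using Fermat's little theorem, refined binomial congruences, and harmonic-sum identities.

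For part (i), I square formula (1.2) and use $b^2 = d+4c$ to eliminate $b$, writing
\[
T_k(b,c)^2 = \sum_{l=0}^{k} \alpha_{k,l}\, c^l\, d^{k-l}
\]
with explicit combinatorial coefficients $\alpha_{k,l}$ (these can be made clean either by direct binomial bookkeeping, or via the Legendre representation $T_n(b,c) = (\sqrt d)^n P_n(b/\sqrt d)$ combined with a Clausen-type expansion of $P_n(x)^2$). Dividing by $(-d)^k$ and interchanging the order of summation, the inner sums $\sum_{k=l}^{p-1}(-1)^k \alpha_{k,l}$ are evaluated modulo $p^2$ using refinements of the form $\binom{p-1}{j}\equiv (-1)^j(1-pH_j)\pmod{p^2}$. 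Terms with $l\ge (p+1)/2$ drop out because $\binom{2l}{l}\equiv 0\pmod p$; the surviving $l\le (p-1)/2$ contributions simplify via Chu--Vandermonde to the stated $\sum_{l=0}^{(p-1)/2}\binom{2l}{l}^2(-c/(4d))^l$.

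For part (ii), the constraint $(m-d)^2 = 16mc$ yields the identity $(m+d)^2 = 4mb^2$, equivalently $1 - 16\cdot(-c/(4d)) = (m+d)^2/(4md)$, a perfect square; this is precisely what makes the evaluation in closed form possible. Starting from a similar polynomial expansion, I compute
\[
\sum_{k=0}^{p-1} T_k(b,c)^2\left(\frac{1}{m^k} - \frac{(-1)^k}{d^k}\right)\pmod{p^2},
\]
which is $O(p)$ modulo $p^2$ after using the constraint. The Fermat-quotient contribution $q_p(d)-q_p(m)$ arises from $(d/m)^{p-1}\equiv 1 + p(q_p(d)-q_p(m))\pmod{p^2}$, and the two $S_{p-1}$ terms appear after rewriting the residual pieces in terms of $\binom{2k}{k}/k$ with arguments $(m+d)/(4m)$ and $(m+d)/(4d)$ (which are natural in view of the perfect-square identity above). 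The prefactors $(-1/p)$ and $pd/(d-m)$ emerge from the normalization of the mod-$p$ leading term and from the constraint-derived factor $d-m$ produced by the difference quotient.

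The main obstacle will be Step 1: pinning down the polynomial identity for $T_k(b,c)^2$ in a form where the inner $k$-sum has a tractable closed form modulo $p^2$, and then tracking the partial-harmonic-sum error terms arising from the refined $\binom{p-1}{j}$ congruences. For part (ii) a secondary but delicate challenge is the bookkeeping that reorganizes the Fermat-quotient and $S_{p-1}$ pieces into the exact combination stated; this requires repeated use of $(m-d)^2 = 16mc$, of standard mod-$p^2$ identities for sums involving $\binom{2k}{k}$, and of care with the Legendre-symbol factor $(-1/p)$, as well as verifying that the hypotheses $p\nmid d$ and $p\nmid md(m-d)$ control all the denominators that appear.
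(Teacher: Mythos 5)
Your strategy for part (ii) --- evaluating $\sum_{k=0}^{p-1}T_k(b,c)^2\bigl(1/m^k-(-1)^k/d^k\bigr)$ and claiming it is $O(p)$ --- fails at the first step: under the constraint $(m-d)^2=16mc$ the two sums already differ modulo $p$ in general. Modulo $p$ the $(-d)$-sum equals the truncated sum $\sum_{l=0}^{(p-1)/2}\binom{2l}{l}^2(-c/(4d))^l$, essentially $P_{(p-1)/2}((d+8c)/d)$, which is not a Legendre symbol for generic admissible $(b,c,m)$. Concretely, take $p=5$, $b=4$, $c=1$ (so $d=12$) and $m=4$, which satisfies $(m-12)^2=16m$: one computes $\sum_{k=0}^{4}T_k(4,1)^2/4^k\equiv 1\pmod 5$ while $\sum_{k=0}^{4}T_k(4,1)^2/(-12)^k\equiv 2\pmod 5$, so the difference is not divisible by $p$ and your reduction collapses. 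The constraint must instead be used inside the Clausen-type expansion of the $m$-sum itself: after interchanging summations the main term is $\sum_{l\le(p-1)/2}\binom{2l}{l}^2\bigl(cm/(m-d)^2\bigr)^l=\sum_{l}\binom{2l}{l}^2/16^l$, and the value $\bigl(\frac{-1}{p}\bigr)$ modulo $p^2$ comes from Mortenson's supercongruence for that sum --- an external input your sketch never invokes, just as it never invokes the Mattarei--Tauraso congruence $\pounds_1(x)\equiv-Q_p(x)\pmod p$ that produces the Fermat quotients $q_p(d)-q_p(m)$. Moreover, reorganizing the order-$p$ pieces into $S_{p-1}((m+d)/(4m))-S_{p-1}((m+d)/(4d))$ is not mere bookkeeping: the paper needs two nontrivial identities discovered with \texttt{Sigma} (its Lemmas 2.3 and 2.4), the congruence $\binom{(p-1)/2}{l}\binom{(p-1)/2+l}{l}(-1)^l\equiv\binom{2l}{l}^2/16^l\pmod{p^2}$, and a further lemma converting $\sum_{k\le(p-1)/2}(1-x)^k/k$ into $S_{p-1}(x/4)$.

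For part (i) your outline (square via a Clausen-type identity in $c,d$, interchange summation, use $\binom{p-2l-1}{k}\equiv\binom{-2l-1}{k}\bigl(1-p(H_{2l+k}-H_{2l})\bigr)\pmod{p^2}$) does follow the paper's route, but the final clause ``simplify via Chu--Vandermonde'' conceals the real work. The inner $k$-range goes up to $p-1$, not $p-2l-1$, so there are wrap-around terms $p-2l\le k\le p-l-1$ handled through $\binom{p+k}{2l}\equiv\frac{p}{2l}\cdot\frac{(-1)^{k+1}}{\binom{2l-1}{k}}\pmod{p^2}$ together with an identity for $\sum_{k}x^k/\binom{2l-1}{k}$; and the $O(p)$ harmonic corrections do not cancel termwise --- they assemble into $\frac{p}{2}\pounds_1(2)$, which offsets the factor $2^{p-1}$ only via $\pounds_1(2)\equiv-2q_p(2)\pmod p$, yielding $2^{p-1}+\frac{p}{2}\pounds_1(2)\equiv1\pmod{p^2}$. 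Without this mechanism (or an equivalent one) part (i) remains an outline rather than a proof, and part (ii) as proposed is based on a false reduction.
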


In fact, $S_{p-1}((m+d)/(4m))$ and $S_{p-1}((m+d)/(4d))$ modulo $p$ in \eqref{maintheq2} can be determined. If $p\mid m+d$, then the two sums are both equivalent to $0$ modulo $p$. Now suppose that $p\nmid m+d$. Sun and Tauraso \cite[Theorem 1.2]{Sun2010} proved that for any prime $p$ and integer $t$ with $p\nmid t$ we have
\begin{equation}\label{suntaurasores}
\sum_{k=1}^{p-1}\f{\binom{2k}{k}}{k(-t)^k}\eq \f{2t^p-2V_p(t)}{pt}\pmod{p},
\end{equation}
where the polynomial sequence $\{V_n(x)\}_{n\in\N}$ is defined as follows:
$$
V_0(x)=2,\ V_1(x)=x,\ \t{and}\ V_{n+1}(x)=x(V_n(x)+V_{n-1}(x))\ (n\in\Z^{+}).
$$
Sun and Tauraso also showed that $V_p(t)\eq t\pmod{p}$. Thus the right-hand side of \eqref{suntaurasores} are always $p$-adic integers. By \eqref{suntaurasores} we obtain
$$
S_{p-1}\l(\f{m+d}{4m}\r)\eq \f{2\l\<-\f{4m}{m+d}\r\>_p^p-2V_p\l(\l\<-\f{4m}{m+d}\r\>_p\r)}{p\l\<-\f{4m}{m+d}\r\>_p}\pmod{p}
$$
and
$$
S_{p-1}\l(\f{m+d}{4d}\r)\eq \f{2\l\<-\f{4d}{m+d}\r\>_p^p-2V_p\l(\l\<-\f{4d}{m+d}\r\>_p\r)}{p\l\<-\f{4d}{m+d}\r\>_p}\pmod{p}.
$$

By Theorem \ref{maintheorem}, we can obtain the following results. (The reader may try to find more applications of Theorem \ref{maintheorem}.)
\begin{corollary}\label{cor}
Let $p$ be an odd prime. Then modulo $p^2$ we have
\begin{align}\label{coreq1}
&\sum_{k=0}^{p-1}\f{T_k(2,2)^2}{4^k}\eq\sum_{k=0}^{p-1}\f{\binom{2k}{k}^2}{8^k}\notag\\
\eq&\begin{cases}\displaystyle\l(\f{2}{p}\r)\l(2x-\f{p}{2x}\r)\ &if\ p\eq1\pmod{4}\ and\ p=x^2+4y^2\ (x,y\in\Z)\ with\ x\eq1\pmod{4},\vspace{2mm}\\
\displaystyle\f{(-1)^{(p+1)/4}2p}{\binom{(p+1)/2}{(p+1)/4}}\ &if\ p\eq3\pmod{4},\end{cases}
\end{align}
and
\begin{align}\label{coreq2}
&\sum_{k=0}^{p-1}\f{T_k(2,-1)^2}{(-8)^k}\notag\\
\eq&\begin{cases}\displaystyle 2x-\f{p}{2x}\ &if\ p\eq1\pmod{4}\ and\ p=x^2+4y^2\ (x,y\in\Z)\ with\ x\eq1\pmod{4},\vspace{2mm}\\
\displaystyle 0\ &if\ p\eq3\pmod{4}.\end{cases}
\end{align}
If $p>3$, then we have
\begin{equation}\label{coreq3}
\sum_{k=0}^{p-1}\f{T_k(4,1)^2}{4^k}\eq\sum_{k=0}^{p-1}\f{T_k(4,1)^2}{36^k}\eq\l(\f{-1}{p}\r)\pmod{p^2}.
\end{equation}
\end{corollary}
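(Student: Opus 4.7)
The plan is to derive all three congruences from Theorem \ref{maintheorem}: part (i) will handle the first equality of \eqref{coreq1} and the analogous reduction in \eqref{coreq2}, while part (ii) will handle \eqref{coreq3}. Taking $b=c=2$ gives $d=-4$, so $(-d)^k=4^k$ and $-c/(4d)=1/8$; Theorem \ref{maintheorem}(i) then yields
$$\sum_{k=0}^{p-1}\f{T_k(2,2)^2}{4^k}\eq\sum_{l=0}^{(p-1)/2}\f{\binom{2l}{l}^2}{8^l}\pmod{p^2}.$$
Since $p\mid\binom{2l}{l}$ for $(p+1)/2\le l\le p-1$, the right-hand sum extends to $l=p-1$ without changing its class modulo $p^2$, proving the first equality in \eqref{coreq1}. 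The same device with $(b,c,d)=(2,-1,8)$ and $-c/(4d)=1/32$ identifies $\sum_{k=0}^{p-1}T_k(2,-1)^2/(-8)^k$ with $\sum_{l=0}^{p-1}\binom{2l}{l}^2/32^l\pmod{p^2}$.

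To finish \eqref{coreq1} and \eqref{coreq2} I will invoke the already-known mod-$p^2$ evaluations of $\sum_{l=0}^{p-1}\binom{2l}{l}^2/8^l$ and $\sum_{l=0}^{p-1}\binom{2l}{l}^2/32^l$ in terms of the representation $p=x^2+4y^2$ (when $p\eq 1\pmod 4$) and $\binom{(p+1)/2}{(p+1)/4}$ (when $p\eq 3\pmod 4$); these closed forms appear in Z.-H.~Sun's work on truncated hypergeometric sums and quadratic-form supercongruences, and matching them against the Legendre-symbol prefactors in the stated formulas is straightforward.

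For \eqref{coreq3} I apply Theorem \ref{maintheorem}(ii) with $(b,c,d)=(4,1,12)$. A direct check shows $(m-d)^2=16mc$ for both $m=4$ and $m=36$ (the two sides being $64$ and $576$), and $p\nmid md(m-d)$ whenever $p>3$. The leading term of \eqref{maintheq2} is $\l(\f{-1}{p}\r)$, which is exactly the claim, so the proof reduces to showing that the correction factor
$$q_p(d)-q_p(m)+S_{p-1}\!\l(\f{m+d}{4m}\r)-S_{p-1}\!\l(\f{m+d}{4d}\r)$$
is $\eq 0\pmod p$. For both $m$ the two $S_{p-1}$-arguments are $1$ and $1/3$ in swapped order, and $q_p(d)-q_p(m)=\pm q_p(3)$, so the task reduces to the single identity
$$S_{p-1}(1/3)-S_{p-1}(1)\eq q_p(3)\pmod p.$$
I will prove this via \eqref{suntaurasores}. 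The three-periodicity of $V_n(-1)$ forces $V_p(-1)=-1$ for every prime $p>3$ and hence $S_{p-1}(1)\eq 0\pmod p$, while the closed form $V_n(-3)=2\cdot 3^{n/2}\cos(5n\pi/6)$ combined with a case-analysis of $p\pmod{12}$ gives $V_p(-3)=-\l(\f{3}{p}\r)\cdot 3\cdot 3^{(p-1)/2}$; applying the refined Euler criterion $3^{(p-1)/2}\eq\l(\f{3}{p}\r)(1+\f{p}{2}q_p(3))\pmod{p^2}$ collapses every sub-case to $V_p(-3)\eq-3-\f{3}{2}pq_p(3)\pmod{p^2}$, and \eqref{suntaurasores} then delivers $S_{p-1}(1/3)\eq q_p(3)\pmod p$.

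The main obstacle is precisely this last step, the mod-$p^2$ determination of $V_p(-3)$. Unlike $V_n(-1)$, the sequence $V_n(-3)$ has no finite period, so the trigonometric Lucas-sequence expression together with the refined Euler criterion is the technical heart of the argument; everything else amounts to routine bookkeeping once the classical evaluations of $\sum_{l=0}^{p-1}\binom{2l}{l}^2/N^l\pmod{p^2}$ are cited.
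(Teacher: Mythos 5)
Your proposal is correct and follows essentially the same route as the paper's proof: Theorem \ref{maintheorem}(i) reduces the sums in \eqref{coreq1} and \eqref{coreq2} to $\sum_{k=0}^{p-1}\binom{2k}{k}^2/8^k$ and $\sum_{k=0}^{p-1}\binom{2k}{k}^2/32^k$, which are then evaluated modulo $p^2$ by the known supercongruences (due to Z.-H. Sun, and to Z.-W. Sun for the $8^k$ sum when $p\eq3\pmod 4$), while \eqref{coreq3} follows from Theorem \ref{maintheorem}(ii) with $m=4,36$ once the bracket reduces to $\pm\l(q_p(3)+S_{p-1}(1)-S_{p-1}(1/3)\r)\eq0\pmod p$. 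The only (harmless) deviation is that you re-derive $S_{p-1}(1)\eq0$ and $S_{p-1}(1/3)\eq q_p(3)\pmod p$ from \eqref{suntaurasores} via a mod-$p^2$ evaluation of $V_p(-3)$ — a computation that checks out — whereas the paper simply cites these facts from Pan--Sun and Sun--Tauraso.
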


Note that \eqref{coreq1} and \eqref{coreq3} confirm Conjecture \ref{sunconj} for the modulus $p^2$ cases. \eqref{coreq2} was brought to our attention by Sun and it was originally conjectured by Sun in his notebook where he has proved the modulus $p$ case. We also note that any prime $p\eq1\pmod{4}$ can be uniquely represented as $x^2+4y^2$, where $x,y\in\Z$ and $x\eq1\pmod{4}$ (cf. \cite[p. 106]{IR}). In \cite{Sun2014b}, Sun also showed that for any prime $p\nmid b-2c$, then
$$
\sum_{k=0}^{p-1}\f{T_k(b,c^2)^2}{(b-2c)^{2k}}\eq \l(\f{-c^2}{p}\r)\pmod{p}.
$$
Trivially, the above congruence holds for $c\eq0\pmod{p}$. If $c\not\equiv0\pmod{p}$, we can obtain the modulus $p^2$ extension of the congruence by \eqref{maintheq2}.

Our method to prove Theorem \ref{maintheorem} is quite different from the one used by Sun to prove \eqref{sunres}. We need the symbolic summation package \verb"Sigma" developed by Schneider \cite{S} to establish some identities. We shall prove Theorem \ref{maintheorem} and Corollary \ref{cor} in the next section.

\medskip

\section{Proofs of Theorem \ref{maintheorem} and Corollary \ref{cor}}
\setcounter{lemma}{0} \setcounter{theorem}{0}
\setcounter{equation}{0}\setcounter{proposition}{0}
\setcounter{Rem}{0}\setcounter{conjecture}{0}

To show the main results, we need the following preliminary results.

We need the following identity obtained by Sun \cite[Lemma 4.1]{Sun2014b} which is deduced from the well-known Clausen identity (cf. \cite[p. 116]{AAR99}).

\begin{lemma}\label{lem1}
Let $b,c\in\mathbb{Z}$ and $d=b^2-4c$. For any $n\in\mathbb{N}$ we have
\begin{equation}\label{lem1eq}
T_n(b,c)^2=\sum_{k=0}^n\binom{n+k}{2k}\binom{2k}{k}^2c^kd^{n-k}.
\end{equation}
\end{lemma}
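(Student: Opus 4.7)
The plan is to deduce the identity from Clausen's classical hypergeometric formula, along the lines hinted at in the excerpt. My first move is to exploit the Legendre-polynomial relation $T_n(b,c)=(\sqrt d)^n P_n(b/\sqrt d)$ recalled in the introduction: since $(b/\sqrt d)^2-1=4c/d$, dividing \eqref{lem1eq} through by $d^n$ reduces it to the pure polynomial identity
$$
P_n(x)^2=\sum_{k=0}^n\binom{n+k}{2k}\binom{2k}{k}^2\l(\f{x^2-1}{4}\r)^k,
$$
which in turn yields the lemma upon substituting $x=b/\sqrt d$ and multiplying through by $d^n$.

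To prove this Legendre identity, I would first re-express $P_n$ via a quadratic transformation. Starting from the standard form $P_n(x)={}_2F_1(-n,n+1;1;(1-x)/2)$ and applying
$$
{}_2F_1(a,b;a+b+\tfrac12;4z(1-z))={}_2F_1(2a,2b;a+b+\tfrac12;z)
$$
with $a=-n/2$, $b=(n+1)/2$ and $z=(1-x)/2$ (so that $a+b+\tfrac12=1$ and $4z(1-z)=1-x^2$) gives $P_n(x)={}_2F_1(-n/2,(n+1)/2;1;1-x^2)$. Clausen's identity
$$
\bigl({}_2F_1(a,b;a+b+\tfrac12;w)\bigr)^2={}_3F_2(2a,2b,a+b;\,a+b+\tfrac12,\,2a+2b;\,w),
$$
applied with the same $a,b$ and $w=1-x^2$, then delivers $P_n(x)^2={}_3F_2(-n,n+1,1/2;1,1;1-x^2)$. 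What remains is bookkeeping: expanding the ${}_3F_2$ via $(-n)_k(n+1)_k=(-1)^k(n+k)!/(n-k)!$, $(1/2)_k=(2k)!/(4^k k!)$ and $(1)_k=k!$ shows that the coefficient of $w^k$ equals $(-1)^k\binom{n+k}{2k}\binom{2k}{k}^2/4^k$, and absorbing $(-1)^k$ into $(1-x^2)^k$ recovers the target sum.

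The step I expect to require the most care is the quadratic transformation when $n$ is odd: then $-n/2$ is a half-integer, so ${}_2F_1(-n/2,(n+1)/2;1;1-x^2)$ does not formally terminate. Nevertheless, the identity holds in a neighbourhood of $x=1$ where both sides converge absolutely, and since after squaring both $P_n(x)^2$ and ${}_3F_2(-n,n+1,1/2;1,1;1-x^2)$ are polynomials in $x$ of degree $2n$, the equality propagates to all $x$ by polynomial continuation.
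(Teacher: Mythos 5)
Your derivation is correct and takes exactly the route the paper indicates: the paper simply quotes this identity as Sun's Lemma 4.1, itself deduced from Clausen's identity via the Legendre-polynomial representation $T_n(b,c)=(\sqrt d)^nP_n(b/\sqrt d)$, which is precisely the argument you carry out (quadratic transformation, Clausen, then Pochhammer bookkeeping, all of which check out). The only point deserving a remark is the degenerate case $d=0$, where your division by $d^n$ and the substitution $x=b/\sqrt d$ are unavailable; since both sides of \eqref{lem1eq} are polynomials in $b$ and $c$, the case $d=0$ follows from the generic case by polynomial continuation (or by a one-line direct verification).
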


For any $n\in\N$, define the $n$th harmonic $H_n:=\sum_{k=1}^n1/k$. Harmonic numbers have many congruence properties. For example, for any odd prime we have
$$
H_{p-1}\eq0\pmod{p}.
$$
(See \cite[p. 37]{IR}.) Note that the above congruence was extended to the modulus $p^2$ case by Wolstenholme \cite{Wolstenholme} for $p>3$.

We now give an identity involving harmonic numbers.

\begin{lemma}\label{lem2}
For any nonnegative integer $n$ we have
\begin{equation}\label{lem2eq}
\sum_{k=0}^{n}\binom{n}{k}H_kx^k=(1+x)^nH_n-\sum_{k=1}^n\f{(1+x)^{n-k}}{k}.
\end{equation}
\end{lemma}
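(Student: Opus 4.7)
The plan is to prove \eqref{lem2eq} by induction on $n$. Write $f_n(x)$ and $g_n(x)$ for the left- and right-hand sides, respectively. The base case $n=0$ is immediate, since $H_0=0$ forces both sides to vanish.

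For the inductive step, I would first derive a simple first-order recurrence for $f_n(x)$. Apply Pascal's rule $\binom{n+1}{k}=\binom{n}{k}+\binom{n}{k-1}$ to the definition of $f_{n+1}(x)$, shift the index in the second piece, and then invoke $H_{k+1}=H_k+1/(k+1)$. The terms involving $H_k$ collapse to $(1+x)f_n(x)$, leaving
$$
f_{n+1}(x)=(1+x)f_n(x)+x\sum_{k=0}^n\binom{n}{k}\f{x^k}{k+1}.
$$
The residual sum is handled by the well-known identity $\binom{n}{k}/(k+1)=\binom{n+1}{k+1}/(n+1)$, which after an index shift and the binomial theorem evaluates to $((1+x)^{n+1}-1)/((n+1)x)$. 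This yields the recurrence
$$
f_{n+1}(x)=(1+x)f_n(x)+\f{(1+x)^{n+1}-1}{n+1}.
$$

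To close the induction, I would substitute the inductive hypothesis $f_n(x)=g_n(x)=(1+x)^nH_n-\sum_{k=1}^n(1+x)^{n-k}/k$ into the recurrence. Multiplying by $(1+x)$ shifts the exponent everywhere by one, and then the new contribution $(1+x)^{n+1}/(n+1)$ combines with $(1+x)^{n+1}H_n$ to form $(1+x)^{n+1}H_{n+1}$, while the leftover $-1/(n+1)$ extends the range of the sum $\sum_{k=1}^n(1+x)^{n+1-k}/k$ to include the new term $k=n+1$ (since $(1+x)^0/(n+1)=1/(n+1)$). The result is exactly $g_{n+1}(x)$. The only mildly fiddly point is the final bookkeeping that absorbs the two extra scalar terms into $H_{n+1}$ and into the extended sum; there is no real combinatorial obstacle, as the identity is essentially a polynomial manipulation driven by the two elementary recursions used at the start.
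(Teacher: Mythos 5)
Your argument is correct, and it is genuinely different from the paper's proof. The paper establishes \eqref{lem2eq} by computer algebra: the package \verb"Sigma" produces the second-order recurrence $-(n+1)(x+1)^2S_n+(2n+3)(x+1)S_{n+1}-(n+2)S_{n+2}=-x$, both sides are checked to satisfy it, and the cases $n=0,1$ are verified. You instead derive by hand the first-order recurrence
\begin{equation*}
f_{n+1}(x)=(1+x)f_n(x)+\frac{(1+x)^{n+1}-1}{n+1},
\end{equation*}
using Pascal's rule, $H_{k+1}=H_k+\tfrac1{k+1}$, and the absorption identity $\binom{n}{k}/(k+1)=\binom{n+1}{k+1}/(n+1)$, and then close a routine induction; all steps check out (the base case, the collapse to $(1+x)f_n(x)$, the evaluation of the residual sum via the binomial theorem, and the final bookkeeping that turns $H_n+\tfrac1{n+1}$ into $H_{n+1}$ and extends the sum to $k=n+1$). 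What your route buys is a fully self-contained, human-verifiable proof needing only elementary identities and a first-order (rather than second-order) recurrence, with no reliance on symbolic summation software; what the paper's route buys is uniformity, since the same \verb"Sigma"-based recurrence-plus-initial-values template is reused for the harder identities \eqref{lem3eq} and \eqref{lem4eq}, where a comparably clean hand derivation is not apparent. One cosmetic remark: your intermediate closed form $((1+x)^{n+1}-1)/((n+1)x)$ has an $x$ in the denominator, so either note that it is multiplied back by $x$ so the recurrence is a polynomial identity valid for all $x$, or simply state the evaluation of $x\sum_{k=0}^n\binom{n}{k}x^k/(k+1)$ directly.
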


\begin{proof}
We prove this identity by the symbolic summation package \verb"Sigma" in Mathematica. Denote the left-hand side of \eqref{lem2eq} by $S_n$. Using the command \verb"GenerateRecurrence" in \verb"Sigma" we find $S_n$ satisfies the following recurrence relation:
$$
-(n+1)(x+1)^2S_n+(2n+3)(x+1)S_{n+1}-(n+2)S_{n+2}=-x\ (n\in\N).
$$
It can be also verified that the right-hand side of \eqref{lem2eq} satisfies the same recurrence relation. Moreover, it is easy to see that the both sides of \eqref{lem2eq} coincide for $n=0,1$. Thus the desired result follows.
\end{proof}

\begin{Rem}
Taking $x=-1$ in \eqref{lem2eq} we obtain the following known identity (cf. \cite[(1.45)]{Gould}):
$$
\sum_{k=0}^n\binom{n}{k}(-1)^kH_k=-H_n.
$$
\end{Rem}

From \cite[(2.1) and (2.4)]{Gould} we know for any $n\in\N$ and $x\neq-1$,
\begin{equation}\label{knowneq}
\sum_{k=0}^{n}\f{x^k}{\binom{n}{k}}=(n+1)\sum_{k=1}^{n+1}\f{x^k+1}{k(x+1)}\l(\f{x}{x+1}\r)^{n+1-k}.
\end{equation}
The next lemma is very similar to \eqref{knowneq}.

\begin{lemma}\label{lem3}
For any $n\in\Z^{+}$ and $x\neq-1$, we have
\begin{equation}\label{lem3eq}
\sum_{k=0}^{n-1}\frac{x^k}{\binom{2n-1}{k}}=\f{2n}{x+1}\sum_{k=1}^{2n}\f{1}{k}\l(\f{x}{x+1}\r)^{2n-k}+\f{2n(x-1)}{(x+1)^2}\sum_{k=1}^n\f{x^k}{k\binom{2k}{k}}\l(\f{x}{x+1}\r)^{2n-2k}.
\end{equation}
\end{lemma}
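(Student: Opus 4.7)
The plan is to follow the strategy of the proof of Lemma \ref{lem2}: verify that both sides of \eqref{lem3eq} satisfy the same linear recurrence in $n$ and agree on sufficiently many initial values. Let $L_n$ and $R_n$ denote the left- and right-hand sides of \eqref{lem3eq} respectively, with $x\neq -1$ treated as a parameter.

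First, I would feed the summand $x^k/\binom{2n-1}{k}$ into the \verb"GenerateRecurrence" routine of the symbolic summation package \verb"Sigma"; creative telescoping should produce a linear recurrence for $L_n$ (I expect of order one or two) with polynomial coefficients in $n$ and $x$. Next, I would produce a recurrence of the same order for $R_n$. Writing $R_n=A_n+B_n$ with
$$A_n=\frac{2n}{x+1}\sum_{k=1}^{2n}\frac{1}{k}\left(\frac{x}{x+1}\right)^{2n-k},\qquad B_n=\frac{2n(x-1)}{(x+1)^2}\sum_{k=1}^n\frac{x^k}{k\binom{2k}{k}}\left(\frac{x}{x+1}\right)^{2n-2k},$$
each summand carries a clean geometric factor $(x/(x+1))^{\text{something in }n}$ which changes predictably as $n\mapsto n+1$, so both $A_n$ and $B_n$ satisfy simple first-order recurrences obtained by isolating the newly added terms; combining them yields a recurrence for $R_n$. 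Alternatively, one can apply \verb"GenerateRecurrence" directly to $R_n$.

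Once the two recurrences are shown to coincide, it suffices to verify $L_n=R_n$ for $n=1$ (and, if the recurrence is second-order, also $n=2$), both of which are straightforward direct computations; a quick hand check gives $L_1=R_1=1$ and $L_2=R_2=(x+3)/3$. Induction then completes the proof. The main obstacle is the polynomial identity needed to confirm the coincidence of the two recurrences: it is routine but mechanical, and this is precisely the sort of verification that \verb"Sigma" is designed to automate. A potential cleaner alternative would be to combine the known identity \eqref{knowneq} (applied with $n\mapsto 2n-1$) with the symmetry $\binom{2n-1}{k}=\binom{2n-1}{2n-1-k}$; this gives $L_n(x)+x^{2n-1}L_n(1/x)$ in the required closed form, but separating $L_n(x)$ from its reciprocal partner seems to require exactly the same sort of auxiliary recurrence argument, so I would stick with the \verb"Sigma" approach.
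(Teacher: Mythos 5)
Your proposal is correct and follows essentially the same route as the paper: the authors also verify that both sides satisfy a (first-order) recurrence in $n$ produced with \verb"Sigma" and then compare initial values. The only cosmetic difference is that the paper first proves an intermediate \verb"Sigma"-generated form of the right-hand side and then converts its first sum into the stated one by an odd/even index split, whereas you apply the recurrence check directly to \eqref{lem3eq}; both versions work, and your hand checks $L_1=R_1=1$, $L_2=R_2=(x+3)/3$ are correct.
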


\begin{proof}
In fact, by \verb"Sigma" we originally find that
\begin{align}\label{key}
\sum_{k=0}^{n-1}\f{x^k}{\binom{2n-1}{k}}=&\f{n}{(x+1)^2}\sum_{k=1}^{n}\f{(4k-1)x+2k-1}{k(2k-1)}\l(\f{x}{x+1}\r)^{2n-2k}\notag\\
&+\f{2n(x-1)}{(x+1)^2}\sum_{k=1}^n\f{x^k}{k\binom{2k}{k}}\l(\f{x}{x+1}\r)^{2n-2k}.
\end{align}
It is easy to verify that the both sides of \eqref{key} satisfy the following recurrence relation:
$$
(n+1)x^2S_n-n(1+x)^2S_{n+1}=-\f{n((4n+3)x+2n+1)}{2n+1}+\f{(n^2+n)(x^{n+1}-x^{n+2})}{(2n+1)\binom{2n}{n}}.
$$
Since the both sides of \eqref{key} coincide for $n=1$. Thus \eqref{key} holds. Now it suffices to show
\begin{equation}\label{key'}
\f{n}{(x+1)^2}\sum_{k=1}^{n}\f{(4k-1)x+2k-1}{k(2k-1)}\l(\f{x}{x+1}\r)^{2n-2k}=\f{2n}{x+1}\sum_{k=1}^{2n}\f{1}{k}\l(\f{x}{x+1}\r)^{2n-k}.
\end{equation}
Clearly,
\begin{align*}
&\f{n}{(x+1)^2}\sum_{k=1}^{n}\f{(4k-1)x+2k-1}{k(2k-1)}\l(\f{x}{x+1}\r)^{2n-2k}\\
=&\f{n}{(x+1)^2}\sum_{k=1}^n\l(\f{2x}{2k-1}+\f{x+1}{k}\r)\l(\f{x}{x+1}\r)^{2n-2k}\\
=&\f{2nx^{2n}}{(x+1)^{2n+1}}\l(\sum_{k=1}^n\f{1}{2k-1}\l(\f{x}{x+1}\r)^{1-2k}+\sum_{k=1}^n\f{1}{2k}\l(\f{x}{x+1}\r)^{-2k}\r)\\
=&\f{2n}{x+1}\sum_{k=1}^{2n}\f{1}{k}\l(\f{x}{x+1}\r)^{2n-k}.
\end{align*}
This completes the proof.
\end{proof}

\begin{lemma}\label{lem4}
For any $n\in\Z^{+}$ and $dm(m+d)\neq0$, we have
\begin{align}\label{lem4eq}
&\sum_{l=1}^{n}\binom{n}{l}\binom{n+l}{l}(-1)^l
\sum_{k=1}^{l}\frac {1}{k\binom{2k}{k}}\bigg(-\frac{(m-d)^2}{md}\bigg)^k\notag\\
=&\frac{d-m}{d+m}(-1)^{n}\bigg(\sum_{k=1}^{n}\frac{(-d/m)^k}{k}-
\sum_{k=1}^{n}\frac{(-m/d)^k}{k}\bigg).
\end{align}
\end{lemma}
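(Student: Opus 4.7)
The plan is to emulate the \texttt{Sigma}-based strategy already used for Lemmas~\ref{lem2} and~\ref{lem3}: I will exhibit a linear recurrence in $n$ (with $m,d$ carried as formal parameters) that is satisfied by both sides of \eqref{lem4eq}, and then conclude by checking enough initial values. Let $L_n$ and $R_n$ denote the left- and right-hand sides of \eqref{lem4eq}, and write $u=-(m-d)^2/(md)$ for brevity.

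I would start by checking the base cases: $L_0=R_0=0$ and $L_1=R_1=(m-d)^2/(md)$ are immediate from the definitions, and $L_2=R_2=(m-d)^4/(2m^2d^2)$ follows from a short hand calculation. If the recurrence derived below turns out to have order $r$, the analogous identities $L_j=R_j$ for $j\le r-1$ can be confirmed in exactly the same way.

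For the recurrence itself, observe that the outer summand $\binom{n}{l}\binom{n+l}{l}(-1)^l$ is proper hypergeometric in $l$ (with $n$ a parameter), and the inner partial sum $\sum_{k=1}^{l}u^k/(k\binom{2k}{k})$ is a definite hypergeometric sum in $l$. The \texttt{GenerateRecurrence} command of \texttt{Sigma}, applied via creative telescoping, should therefore return a linear recurrence
$$
\sum_{j=0}^{r}\alpha_j(n)\,L_{n+j}=\gamma(n),
$$
whose coefficients $\alpha_j$ and inhomogeneity $\gamma$ are explicit rational functions in $n,m,d$; the term $\gamma(n)$ absorbs the boundary contributions coming from the inner sum at $l=n+j$.

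To close the argument I would substitute the closed form of $R_n$ into this recurrence. Since
$$
\sum_{k=1}^{n+1}\f{(-d/m)^k}{k}-\sum_{k=1}^{n}\f{(-d/m)^k}{k}=\f{(-d/m)^{n+1}}{n+1}
$$
and analogously for the companion sum with $(-m/d)^k$, the harmonic-like tails in each shifted $R_{n+j}$ telescope explicitly, and the recurrence collapses to a rational-function identity in $n,m,d$ that can be verified mechanically. Combined with the base cases, induction on $n$ then yields $L_n=R_n$ for every $n\ge 0$. The principal obstacle is bookkeeping: with $m$ and $d$ treated as symbolic parameters, the coefficients produced by \texttt{Sigma} can be bulky, and matching them against $R_n$ after the harmonic telescoping demands careful simplification; however, the final verification is purely algebraic, exactly as in the proof of Lemma~\ref{lem3}.
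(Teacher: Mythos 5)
Your proposal is correct and follows essentially the same route as the paper: the paper likewise verifies \eqref{lem4eq} by exhibiting a \texttt{Sigma}-generated linear recurrence (a homogeneous one of order three in $n$, with $m,d$ symbolic) satisfied by both sides and then checking the initial cases $n=0,1,2$. Your base-case computations agree with this, and your extra remarks on telescoping the harmonic-type tails of the right-hand side are just the mechanical verification the paper leaves implicit.
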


\begin{proof}
\eqref{lem4eq} is also found by \verb"Sigma". It is easy to see that both sides of \eqref{lem4eq} satisfy the same recurrence relation:
\begin{align*}
&md(n+1)S_n+(-2d^2+md-2m^2-d^2n+mdn-m^2n)S_{n+1}\\
+&(-2d^2+3md-2m^2-d^2n+mdn-m^2n)S_{n+2}+md(n+3)S_{n+3}=0.
\end{align*}
Moreover, the both sides of \eqref{lem4eq} coincide when $n=0,1,2$. This proves the desired lemma.
\end{proof}

\begin{Rem}
If we replace the $-(m-d)^2/(md)$ by arbitrary $x\in\C$ in the left-hand side of \eqref{lem4eq}, then no simple form can be found by \verb"Sigma".
\end{Rem}

Let $p$ be a prime and $x$ a $p$-adic integer. Define the finite polylogarithms (cf. \cite{MT2013}) as follows:
$$
\pounds_{d}(x)=\sum_{k=1}^{p-1}\f{x^k}{k^d}.
$$
Let $Q_p(x)=(x^p+(1-x)^p-1)/p$. By Fermat's little theorem, $Q_p(x)\in\Z_p$. The following lemma concerns the congruence relation between $\pounds_{d}(x)$ and $Q_p(x)$.

\begin{lemma}[Mattarei and Tauraso \cite{MT2013}]\label{lem5} For any odd prime $p$ and $p$-adic integer $x$ with $p\nmid x(1-x)$ we have
$$
\pounds_1(x)\eq-Q_p(x)\pmod{p}.
$$
\end{lemma}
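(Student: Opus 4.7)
The plan is to give a short direct proof by reducing $Q_p(x)$ to a polynomial in $x$ with coefficients $\binom{p}{k}/p$ and then invoking the standard congruence $\binom{p-1}{k-1}\equiv(-1)^{k-1}\pmod{p}$.

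First I would expand $(1-x)^p$ by the binomial theorem. For odd $p$ the contribution $(-x)^p=-x^p$ cancels with the $x^p$ in the definition of $pQ_p(x)$, leaving
$$x^p+(1-x)^p-1=\sum_{k=1}^{p-1}\binom{p}{k}(-x)^k.$$
Since $p\mid\binom{p}{k}$ for $1\le k\le p-1$, dividing by $p$ presents $Q_p(x)$ as a $p$-adic integer polynomial
$$Q_p(x)=\sum_{k=1}^{p-1}\frac{\binom{p}{k}}{p}(-x)^k.$$

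Next I would use the factorization $\binom{p}{k}/p=\binom{p-1}{k-1}/k$ together with
$\binom{p-1}{k-1}\equiv(-1)^{k-1}\pmod{p}$ (which comes from $(p-1)(p-2)\cdots(p-k+1)\equiv(-1)^{k-1}(k-1)!\pmod{p}$) to obtain $\binom{p}{k}/p\equiv(-1)^{k-1}/k\pmod{p}$ for $1\le k\le p-1$. Substituting termwise gives $(-1)^{k-1}(-x)^k/k=-x^k/k$, so
$$Q_p(x)\equiv-\sum_{k=1}^{p-1}\frac{x^k}{k}=-\pounds_1(x)\pmod{p},$$
which rearranges to the desired congruence.

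The main obstacle is essentially absent: the proof reduces to two classical facts (the divisibility of interior binomial coefficients $\binom{p}{k}$ by $p$, and $\binom{p-1}{k-1}\equiv(-1)^{k-1}$). The only minor care is the parity observation that handles the $k=p$ term when $p$ is odd. I note that the hypothesis $p\nmid x(1-x)$ plays no role in this mod-$p$ statement, so it may simply be inherited from the broader context of \cite{MT2013} where stronger lifts are obtained; the bare congruence $\pounds_1(x)\equiv-Q_p(x)\pmod{p}$ in fact holds for every $p$-adic integer $x$.
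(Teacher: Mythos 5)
Your proof is correct and complete. Note that the paper does not prove this lemma at all: it is simply quoted from Mattarei and Tauraso \cite{MT2013}, so your argument differs from the paper's treatment in being a short self-contained derivation. Each step checks out: for odd $p$ the term $(-x)^p=-x^p$ cancels the $x^p$ in $pQ_p(x)$, leaving $pQ_p(x)=\sum_{k=1}^{p-1}\binom{p}{k}(-x)^k$; the identity $\binom{p}{k}/p=\binom{p-1}{k-1}/k$ together with $\binom{p-1}{k-1}\equiv(-1)^{k-1}\pmod{p}$ gives $\binom{p}{k}/p\equiv(-1)^{k-1}k^{-1}\pmod{p}$, and termwise substitution (legitimate since these coefficients and $x$ are $p$-adic integers) yields $Q_p(x)\equiv-\pounds_1(x)\pmod{p}$. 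Your side remark is also accurate: the hypothesis $p\nmid x(1-x)$ is superfluous for the mod-$p$ statement and is evidently inherited from the stronger lifts (modulo higher powers of $p$) proved in \cite{MT2013}; in this paper the lemma is only invoked with $x=2$ and $x=(d-m)/d$, both covered by your more general version. What the citation buys the authors is brevity; what your argument buys is that this ingredient of the proof of Theorem \ref{maintheorem} becomes independent of the external reference.
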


\begin{lemma}\label{lem6}
Let $p$ be an odd prime. Then for any $p$-adic integer $x$ we have
\begin{equation}\label{lem6eq}
\sum_{k=1}^{(p-1)/2}\f{(1-x)^k}{k}\eq H_{(p-1)/2}+S_{p-1}\l(\f{x}{4}\r)\pmod{p}.
\end{equation}
\end{lemma}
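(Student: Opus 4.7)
The plan is to expand $(1-x)^k$ by the binomial theorem, swap the order of summation, and recognize the inner sum as a truncation of $S_{p-1}(x/4)$. First I would write
\[
\sum_{k=1}^{(p-1)/2}\f{(1-x)^k}{k}=\sum_{k=1}^{(p-1)/2}\f{1}{k}+\sum_{k=1}^{(p-1)/2}\f{1}{k}\sum_{j=1}^{k}\binom{k}{j}(-x)^j,
\]
which contributes exactly the $H_{(p-1)/2}$ term on the right-hand side. The remaining double sum should then be rearranged using $\binom{k}{j}/k=\binom{k-1}{j-1}/j$ and the hockey-stick identity $\sum_{k=j}^{(p-1)/2}\binom{k-1}{j-1}=\binom{(p-1)/2}{j}$, yielding
\[
\sum_{j=1}^{(p-1)/2}\f{(-x)^j}{j}\binom{(p-1)/2}{j}.
\]

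Next I would reduce the binomial $\binom{(p-1)/2}{j}$ modulo $p$. Since $(p-1)/2\eq -1/2\pmod{p}$, a direct factor-by-factor computation gives $\binom{(p-1)/2}{j}\eq\binom{-1/2}{j}=(-1)^j\binom{2j}{j}/4^j\pmod{p}$ for $0\ls j\ls (p-1)/2$. Substituting this congruence turns the sum into
\[
\sum_{j=1}^{(p-1)/2}\f{\binom{2j}{j}}{j}\l(\f{x}{4}\r)^j\pmod{p},
\]
which is the truncation of $S_{p-1}(x/4)$ to its first $(p-1)/2$ terms.

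Finally I would show the tail terms $(p+1)/2\ls j\ls p-1$ of $S_{p-1}(x/4)$ vanish modulo $p$. For such $j$ we have $p\ls 2j<2p$ and $j<p$, so $v_p\binom{2j}{j}=1$ while $j$ is a unit modulo $p$; hence $\binom{2j}{j}/j\eq0\pmod{p}$ and those terms can be appended without changing the congruence, completing the identification with $S_{p-1}(x/4)$.

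I do not expect any serious obstacle here: the main ingredients are all elementary (binomial expansion, hockey-stick, the standard congruence for $\binom{(p-1)/2}{j}$, and the $p$-divisibility of $\binom{2j}{j}$ in the upper half-range). The only point to handle carefully is the bookkeeping of summation indices when swapping orders and making sure the tail-vanishing argument is valid for every $j$ in the stated range, but this is routine.
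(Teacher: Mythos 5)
Your proposal is correct and follows essentially the same route as the paper: binomial expansion, interchange of summation with $\binom{k}{j}/k=\binom{k-1}{j-1}/j$ and the hockey-stick (Chu) identity, the congruence $\binom{(p-1)/2}{j}\eq\binom{2j}{j}/(-4)^j\pmod p$, and the vanishing of $\binom{2j}{j}$ modulo $p$ for $(p+1)/2\ls j\ls p-1$ (which the paper leaves implicit but you spell out). No gaps.
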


\begin{proof}
By the binomial theorem, we have
\begin{align*}
&\sum_{k=1}^{(p-1)/2}\f{(1-x)^k}{k}=\sum_{k=1}^{(p-1)/2}\f{1}{k}\sum_{s=0}^k\binom{k}{s}(-x)^s\\
=&H_{(p-1)/2}+\sum_{s=1}^{(p-1)/2}(-x)^s\sum_{k=s}^{(p-1)/2}\f{1}{k}\binom{k}{s}\\
=&H_{(p-1)/2}+\sum_{s=1}^{(p-1)/2}\f{(-x)^s}{s}\sum_{k=s}^{(p-1)/2}\binom{k-1}{s-1}\\
=&H_{(p-1)/2}+\sum_{s=1}^{(p-1)/2}\f{(-x)^s}{s}\binom{(p-1)/2}{s}\\
\eq&H_{(p-1)/2}+S_{p-1}\l(\f{x}4\r)\pmod{p},
\end{align*}
where we have used the Chu identity (cf. \cite[(1.51)]{Gould}) and the fact $\binom{(p-1)/2}{s}\eq\binom{2s}{s}/(-4)^s\pmod{p}$.
\end{proof}

Now we are in a position to prove Theorem \ref{maintheorem}.

\medskip

\noindent{\it Proof of Theorem \ref{maintheorem}}. Suppose that $p\mid md(m-d)$. Note that
$$
\binom{2k}{k}\eq0\pmod{p}
$$
for $k$ among $(p+1)/2,\ldots,p-1$. Therefore, in view of Lemma \ref{lem1} we have
\begin{align*}
\sum_{k=0}^{p-1}\frac{T_k(b,c)^2}{m^k}
&=\sum_{k=0}^{p-1}\frac{d^k}{m^k}\sum_{l=0}^k\binom{k+l}{2l}\binom{2l}{l}^2\f{c^l}{d^l}\\
&\eq\sum_{l=0}^{(p-1)/2}\binom{2l}{l}^2\f{c^l}{d^l}\sum_{k=l}^{p-1}\binom{k+l}{2l}\frac{d^k}{m^k}\\
&=\sum_{l=0}^{(p-1)/2}\binom{2l}{l}^2\f{c^l}{m^l}\sum_{k=0}^{p-l-1}\binom{k+2l}{2l}\frac{d^k}{m^k}\\
&=\Sigma_1+\Sigma_2 \pmod{p^2},
\end{align*}
where
$$
\Sigma_1:=\sum_{l=0}^{(p-1)/2}\binom{2l}{l}^2\f{c^l}{m^l}\sum_{k=0}^{p-2l-1}\binom{k+2l}{2l}\f{d^k}{m^k}
$$
and
$$
\Sigma_2:=\sum_{l=1}^{(p-1)/2}\binom{2l}{l}^2\f{c^l}{m^l}\sum_{k=p-2l}^{p-l-1}\binom{k+2l}{2l}\f{d^k}{m^k}.
$$

We first evaluate $\Sigma_1$ modulo $p^2$. Clearly, for any $k\in\{0,1,\ldots,p-2l-1\}$ we have
$$
\binom{p-2l-1}{k}=\prod_{j=1}^k\f{p-2l-j}{j}\eq\binom{-2l-1}{k}(1-p(H_{2l+k}-H_{2l}))\pmod{p^2}.
$$
Hence we have
\begin{align}\label{sig1}
\Sigma_1=&\sum_{l=0}^{(p-1)/2}\binom{2l}{l}^2\f{c^l}{m^l}\sum_{k=0}^{p-2l-1}\binom{-2l-1}{k}\l(-\f{d}{m}\r)^k\notag\\
\eq&\sum_{l=0}^{(p-1)/2}\binom{2l}{l}^2\f{c^l}{m^l}\sum_{k=0}^{p-2l-1}\binom{p-2l-1}{k}\l(-\f{d}{m}\r)^k(1+p(H_{2l+k}-H_{2l}))\notag\\
=&\sum_{l=0}^{(p-1)/2}\binom{2l}{l}^2\f{c^l}{m^l}\l(\f{m-d}{m}\r)^{p-2l-1}(1-pH_{2l})\notag\\
&+p\sum_{l=0}^{(p-1)/2}\binom{2l}{l}^2\f{c^l}{m^l}\sum_{k=0}^{p-2l-1}\binom{p-2l-1}{k}\l(-\f{d}{m}\r)^kH_{2l+k}\pmod{p^2}.
\end{align}
Note that for any $0\leq k\leq p-1$,
$$
H_{p-1-k}=\sum_{j=1}^{p-1-k}\f{1}{j}=\sum_{j=k+1}^{p-1}\f{1}{p-j}\eq H_k-H_{p-1}\eq H_k\pmod{p}.
$$
Thus
\begin{align*}
&\sum_{k=0}^{p-2l-1}\binom{p-2l-1}{k}\l(-\f{d}{m}\r)^kH_{2l+k}\\
=&\sum_{k=0}^{p-2l-1}\binom{p-2l-1}{k}\l(-\f{d}{m}\r)^{p-2l-1-k}H_{p-1-k}\\
\eq&\l(-\f{m}{d}\r)^{2l}\sum_{k=0}^{p-2l-1}\binom{p-2l-1}{k}\l(-\f{m}{d}\r)^kH_k\\
\eq&\l(\f{m}{m-d}\r)^{2l}\l(H_{2l}-\sum_{k=1}^{p-2l-1}\f{1}{k}\l(\f{d}{d-m}\r)^k\r)\\
\eq&\l(\f{m}{m-d}\r)^{2l}H_{2l}+\f{d}{d-m}\l(\f{m}{m-d}\r)^{2l}\sum_{k=2l+1}^{p-1}\f{1}{k}\l(\f{d-m}{d}\r)^k\pmod{p},
\end{align*}
where we have used Lemma \ref{lem2} with $x=-m/d$. Substituting this into \eqref{sig1} we arrive at
\begin{align}\label{sig1'}
\Sigma_1\eq& \l(\f{m-d}{m}\r)^{p-1}\sum_{l=0}^{(p-1)/2}\binom{2l}{l}^2\l(\f{cm}{(m-d)^2}\r)^l\notag\\
&+\f{pd}{d-m}\sum_{l=0}^{(p-1)/2}\binom{2l}{l}^2\l(\f{cm}{(m-d)^2}\r)^l\sum_{k=2l+1}^{p-1}\f{1}{k}\l(\f{d-m}{d}\r)^k\pmod{p^2}.
\end{align}

Now we turn to consider $\Sigma_2$ modulo $p^2$. Note that for $l\in\{1,2,\ldots,(p-1)/2\}$ we have
\begin{align}
\binom{p+k}{2l}&=\frac{(p+k)\cdots(p+1) p(p-1)\cdots(p+k-2l+1)}{(2l)!}\notag\\
&\equiv (-1)^{k+1}\frac{p}{2l}\frac{k!(2l-1-k)!}{(2l-1)!}\notag=\frac{p}{2l}\frac{(-1)^{k+1}}{\binom{2l-1}{k}}\pmod{p^2}.
\end{align}
So using Lemma \ref{lem3} with $x=-d/m$ we have
\begin{align}\label{sig2}
\Sigma_2=&\sum_{l=1}^{(p-1)/2}\binom{2l}{l}^2\f{c^l}{m^l}\sum_{k=p-2l}^{p-1}\binom{k+2l}{2l}\f{d^k}{m^k}=\sum_{l=1}^{(p-1)/2}\binom{2l}{l}^2\f{c^l}{m^l}\sum_{k=0}^{l-1}\binom{p+k}{2l}\l(\f{d}{m}\r)^{p+k-2l}\notag\\
\eq&-\f{pd}{2m}\sum_{l=1}^{(p-1)/2}\f{\binom{2l}{l}^2}{l}\l(\f{cm}{d^2}\r)^l\sum_{k=0}^{l-1}\l(-\f{d}{m}\r)^k\f{1}{\binom{2l-1}{k}}\notag\\
\eq&\f{pd}{d-m}\sum_{l=1}^{(p-1)/2}\binom{2l}{l}^2\l(\f{cm}{(m-d)^2}\r)^l\sum_{k=1}^{2l}\f{1}{k}\l(\f{d-m}{d}\r)^k\notag\\
&+\f{pd(m+d)}{(d-m)^2}\sum_{l=1}^{(p-1)/2}\binom{2l}{l}^2\l(\f{cm}{(m-d)^2}\r)^l\sum_{k=1}^l\f{1}{k\binom{2k}{k}}\l(-\f{(m-d)^2}{md}\r)^k\pmod{p^2}.
\end{align}

If $m=-d$, then combining \eqref{sig1'} and \eqref{sig2} we have
\begin{align*}
&\sum_{k=0}^{p-1}\f{T_k(b,c)^2}{m^k}\eq\Sigma_1+\Sigma_2\\
\eq&2^{p-1}\sum_{l=0}^{(p-1)/2}\binom{2l}{l}^2\l(-\f{c}{4d}\r)^l+\f12p\sum_{l=0}^{(p-1)/2}\binom{2l}{l}^2\l(-\f{c}{4d}\r)^l\pounds_1(2)\\
\eq&\sum_{l=0}^{(p-1)/2}\binom{2l}{l}^2\l(-\f{c}{4d}\r)^l\pmod{p^2},
\end{align*}
where in the last step follows from Lemma \ref{lem5}. This proves \eqref{maintheq1}.

Below we assume that $16mc=(m-d)^2$. Now from \eqref{sig1'} and \eqref{sig2} we have
\begin{align}\label{sig1+sig2}
\Sigma_1+\Sigma_2\eq&\sum_{l=0}^{(p-1)/2}\f{\binom{2l}{l}^2}{16^l}\l(\l(\f{m-d}{m}\r)^{p-1}+\f{pd}{d-m}\pounds_1\l(\f{d-m}{d}\r)\r)\notag\\
&+\f{pd(m+d)}{16mc}\sum_{l=1}^{(p-1)/2}\f{\binom{2l}{l}^2}{16^l}\sum_{k=1}^{l}\f{1}{k\binom{2k}{k}}\l(-\f{(m-d)^2}{md}\r)^k\pmod{p^2}.
\end{align}
It is known from \cite[Lemma 3.1]{Sun2011} that for any $0\leq l\leq (p-1)/2$ we have
$$
\binom{(p-1)/2}{l}\binom{(p-1)/2+l}{l}(-1)^k\eq \f{\binom{2l}{l}^2}{16^l}\pmod{p^2}.
$$
Therefore by Lemmas \ref{lem4} and \ref{lem6} we arrive at
\begin{align}\label{important}
&\sum_{l=1}^{(p-1)/2}\f{\binom{2l}{l}^2}{16^l}\sum_{k=1}^{l}\f{1}{k\binom{2k}{k}}\l(-\f{16c}{d}\r)^k\notag\\
\eq&\sum_{l=1}^{(p-1)/2}\binom{(p-1)/2}{l}\binom{(p-1)/2+l}{l}(-1)^k\sum_{k=1}^{l}\f{1}{k\binom{2k}{k}}\l(-\f{(m-d)^2}{md}\r)^k\notag\\
\eq&\f{d-m}{d+m}\l(\f{-1}{p}\r)\l(\sum_{k=1}^{(p-1)/2}\f{(-d/m)^k}{k}-\sum_{k=1}^{(p-1)/2}\f{(-m/d)^k}{k}\r)\notag\\
\eq&\f{d-m}{d+m}\l(\f{-1}{p}\r)\l(S_{p-1}\l(\f{m+d}{4m}\r)-S_{p-1}\l(\f{m+d}{4d}\r)\r)\pmod{p}.
\end{align}
Mortenson \cite{Mortenson} proved that for any odd prime $p$ we have
\begin{equation}\label{hyper}
\sum_{k=0}^{p-1}\f{\binom{2k}{k}^2}{16^k}\eq\l(\f{-1}{p}\r)\pmod{p^2}
\end{equation}
as conjectured by Rodriguez-Villegas \cite{RV}. Substituting \eqref{important} and \eqref{hyper} into \eqref{sig1+sig2} and using Lemma \ref{lem5}, we immediately obtain \eqref{maintheq2}.

The proof of Theorem \ref{maintheorem} is now complete.\qed

\medskip

\noindent{\it Proof of Corollary \ref{cor}}. We first consider \eqref{coreq1} and \eqref{coreq2}. By \eqref{maintheq1}, we have
$$
\sum_{k=0}^{p-1}\f{T_k(2,2)^2}{4^k}\eq\sum_{k=0}^{p-1}\f{\binom{2k}{k}^2}{8^k}\pmod{p^2}
$$
and
$$
\sum_{k=0}^{p-1}\f{T_k(2,2)^2}{(-8)^k}\eq\sum_{k=0}^{p-1}\f{\binom{2k}{k}^2}{32^k}\pmod{p^2}.
$$
Sun \cite{Sun2011a} conjectured that if $p\eq1\pmod{4}$ and $p=x^2+4y^2$ with $x\eq1\pmod{4}$, then 
$$
\l(\f{2}{p}\r)\sum_{k=0}^{p-1}\f{\binom{2k}{k}^2}{8^k}\eq \sum_{k=0}^{p-1}\f{\binom{2k}{k}^2}{32^k}\eq 2x-\f{p}{2x}.
$$
Later, Z.-H. Sun \cite{SunZH2011} confirmed this conjecture and showed that
$$
\sum_{k=0}^{p-1}\f{\binom{2k}{k}^2}{32^k}\eq0\pmod{p^2}
$$
for $p\eq3\pmod{4}$. Sun \cite{Sun2013} proved that if $p\eq3\pmod{4}$, then
$$
\sum_{k=0}^{p-1}\f{\binom{2k}{k}^2}{8^k}\eq\f{(-1)^{(p+1)/4}2p}{\binom{(p+1)/2}{(p+1)/4}}\pmod{p^2}.
$$
In view of the above, we obtained \eqref{coreq1} and \eqref{coreq2} as desired.

Now we consider \eqref{coreq3}. Taking $b=4,c=1$ we have $d=b^2-4c=12$. Solving the equation $(m-12)^2=16m$ we obtain $m=4,36$. Therefore, \eqref{coreq3} is a special case of \eqref{maintheq2}. By \eqref{maintheq2} we deduce that
$$
\sum_{k=0}^{p-1}\f{T_k(4,1)^2}{4^k}\eq \l(\f{-1}{p}\r)+\f{3p}{2}\l(\f{-1}{p}\r)\l(q_p(12)-q_p(4)+S_{p-1}(1)-S_{p-1}\l(\f13\r)\r)\pmod{p^2}.
$$
For any prime $p>3$, Pan and Sun \cite{PanSun} proved that $S_{p-1}(1)\eq0\pmod{p}$; Sun and Tauraso \cite{Sun2010} proved that $S_{p-1}(1/3)\eq q_p(3)\pmod{p}$. Combining the above, we obtain \eqref{coreq3} at once.\qed

\begin{Acks}
The authors are grateful to Prof. Zhi-Wei Sun for his helpful suggestions on this paper. This work is supported by the National Natural Science Foundation of China (grant no. 11971222).
\end{Acks}

\end{document}